\documentclass[11pt]{article}
\usepackage{amsthm,geometry,amssymb,amsmath,enumerate,float,tikz,cite,algorithm2e,setspace}
\geometry{a4paper,left=2cm,right=2cm, top=2cm, bottom=3cm}

\newtheorem{theorem}{Theorem}[section]

\usepackage{showlabels}
\usepackage{lineno}

\title{Girth, minimum degree, independence, and
broadcast independence}
\author{S. Bessy$^1$ \and  D. Rautenbach$^2$}
\date{}

\begin{document}
\onehalfspace

\maketitle
\vspace{-10mm}
\begin{center}
{\small
$^1$ 
Laboratoire d'Informatique, de Robotique et de Micro\'{e}lectronique de Montpellier,\\
Montpellier, France, \texttt{stephane.bessy@lirmm.fr}\\[3mm]
$^2$ Institute of Optimization and Operations Research, Ulm University,\\
Ulm, Germany, \texttt{dieter.rautenbach@uni-ulm.de}}
\end{center}

\begin{abstract}
An independent broadcast on a connected graph $G$
is a function $f:V(G)\to \mathbb{N}_0$
such that, for every vertex $x$ of $G$, 
the value $f(x)$ is at most the eccentricity of $x$ in $G$,
and $f(x)>0$ implies that $f(y)=0$ 
for every vertex $y$ of $G$ within distance at most $f(x)$ from $x$.
The broadcast independence number $\alpha_b(G)$ of $G$
is the largest weight $\sum\limits_{x\in V(G)}f(x)$
of an independent broadcast $f$ on $G$.

It is known that $\alpha(G)\leq \alpha_b(G)\leq 4\alpha(G)$
for every connected graph $G$,
where $\alpha(G)$ is the independence number of $G$.
If $G$ has girth $g$ and minimum degree $\delta$,
we show that 
$\alpha_b(G)\leq 2\alpha(G)$
provided that 
$g\geq 6$ and $\delta\geq 3$
or that $g\geq 4$ and $\delta\geq 5$.
Furthermore, 
we show that, 
for every positive integer $k$,
there is a connected graph $G$ of girth at least $k$ and minimum degree at least $k$ 
such that 
$\alpha_b(G)\geq 2\left(1-\frac{1}{k}\right)\alpha(G)$.
Our results imply that 
lower bounds on the girth and the minimum degree
of a connected graph $G$
can lower the fraction $\frac{\alpha_b(G)}{\alpha(G)}$
from $4$ below $2$, but not any further.
\end{abstract}
{\small 
\begin{tabular}{lp{13cm}}
{\bf Keywords}: & broadcast independence; independence; packing\\
{\bf MSC 2010}: & 05C69; 05C12
\end{tabular}
}

\pagebreak

\section{Introduction}

In the present paper, we relate broadcast independence 
to independence and packings in graphs
of large girth and minimum degree.
We consider finite, simple, and undirected graphs,
and use standard terminology and notation.
A set $I$ of pairwise nonadjacent vertices of a graph $G$ 
is an {\it independent set} in $G$,
and the maximum cardinality of an independent set in $G$
is the {\it independence number} $\alpha(G)$ of $G$.
Similarly, a set $P$ of vertices of $G$ is a {\it packing}
if ${\rm dist}_G(x,y)\geq 3$ 
for every two distinct vertices $x$ and $y$ in $P$, 
where ${\rm dist}_G(x,y)$ is the distance of $x$ and $y$ in $G$.
The maximum cardinality of a packing in $G$
is the {\it packing number} $\rho(G)$ of $G$.
The independence number and the packing number 
are among the most fundamental and 
well studied graph parameters \cite{tovo}.
Broadcast independence was introduced by Erwin \cite{er},
cf. also \cite{duerhahehe}, 
and was studied in \cite{ahboso,boze,bera1,bera2}.
Let $\mathbb{N}_0$ be the set of nonnegative integers.
For a connected graph $G$, a function $f:V(G)\to \mathbb{N}_0$ is an {\it independent broadcast on $G$} if
\begin{quote}
\begin{enumerate}[(B1)]
\item $f(x)\leq {\rm ecc}_G(x)$ for every vertex $x$ of $G$, where ${\rm ecc}_G(x)$ is the eccentricity of $x$ in $G$, and
\item ${\rm dist}_G(x,y)>\max\{ f(x),f(y)\}$ for every two distinct vertices $x$ and $y$ of $G$ with $f(x),f(y)>0$.
\end{enumerate}
\end{quote}
The {\it weight} of $f$ is $\sum\limits_{x\in V(G)}f(x)$.
The {\it broadcast independence number} $\alpha_b(G)$ of $G$ is the maximum weight of an independent broadcast on $G$,
and an independent broadcast on $G$ 
of weight $\alpha_b(G)$ is {\it optimal}.
For an integer $k$, 
let $[k]$ be the set of all positive integers at most $k$.

Let $G$ be a connected graph.
A function $f$ that assigns $1$ to every vertex in some independent set in $G$, and $0$ to every other vertex of $G$,
is an independent broadcast on $G$,
which implies $\alpha_b(G)\geq \alpha(G)$.
Our main result in \cite{bera2} implies $\alpha_b(G)\leq 4\alpha(G)$,
and, hence, 
$$1\leq \frac{\alpha_b(G)}{\alpha(G)}\leq 4
\mbox{ for every connected graph $G$.}$$
The existing results and proofs 
suggest that $\frac{\alpha_b(G)}{\alpha(G)}$ 
should be smaller than $4$ 
for connected graphs $G$ 
of sufficiently large local expansion and sparsity.
Natural hypotheses ensuring these properties 
are lower bounds on the girth and the minimum degree.
In the present paper, we explore how much the upper bound
on $\frac{\alpha_b(G)}{\alpha(G)}$ can be improved
for connected graphs $G$ of large girth and minimum degree.
Our two main results are the following.

\begin{theorem}\label{theorem1}
If $G$ is a connected graph of girth at least $6$ 
and minimum degree at least $3$, then
$$\alpha_b(G)<2\alpha(G).$$
\end{theorem}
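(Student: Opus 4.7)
The plan is to fix an optimal independent broadcast $f$ on $G$, let $S = \{x \in V(G) : f(x) > 0\}$ and $k_x = f(x)$ for $x \in S$, and construct an independent set $I$ in $G$ with $|I| > \tfrac{1}{2}\alpha_b(G)$; this yields $\alpha_b(G) < 2|I| \leq 2\alpha(G)$. Observe that (B2) already forces $S$ itself to be independent, since $\mathrm{dist}_G(x,y) > \max\{k_x, k_y\} \geq 1$ for distinct $x, y \in S$.

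For each $x \in S$ I would set $r_x := \lfloor k_x/2 \rfloor$ and pick a geodesic $P_x : x = w_0^x, \ldots, w_{r_x}^x$ of length $r_x$ in $G$ (which exists because $r_x \leq k_x \leq \mathrm{ecc}_G(x)$). From $P_x$ I construct $I_x \subseteq B_{r_x}(x)$ of size $r_x + 1$ as follows: keep the vertices of $P_x$ at positions of one fixed parity, and at every intermediate position $i$ of the opposite parity, replace $w_i^x$ by an off-path neighbor $w_i^{x,\ast} \in N(w_i^x) \setminus \{w_{i-1}^x, w_{i+1}^x\}$, which exists since $\delta \geq 3$. A routine check using $g \geq 6$ (which forbids cycles of length at most $5$) confirms that $I_x$ is independent in $G$ and that every $w_i^{x,\ast}$ lies in $B_{r_x}(x)$. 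Because $\mathrm{dist}_G(x,y) > \max\{k_x, k_y\} \geq \lfloor k_x/2 \rfloor + \lfloor k_y/2 \rfloor$ for distinct $x, y \in S$, the balls $B_{r_x}(x)$ are pairwise disjoint, and hence so are the $I_x$. Summing, $\sum_{x \in S} |I_x| = \sum_x (\lfloor k_x/2 \rfloor + 1) \geq \tfrac{1}{2}(\alpha_b(G) + |S|) > \tfrac{1}{2}\alpha_b(G)$, so if the union $I := \bigcup_{x \in S} I_x$ is independent in $G$ the proof is complete.

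The main obstacle is therefore to rule out edges of $G$ between $I_x$ and $I_y$ for distinct $x, y \in S$. An easy distance argument shows that any such edge $uv$ forces $\mathrm{dist}_G(x, y) \leq r_x + r_y + 1$, and combined with $\mathrm{dist}_G(x, y) \geq \max\{k_x, k_y\} + 1$ and $\lfloor k_x/2 \rfloor + \lfloor k_y/2 \rfloor \leq (k_x + k_y)/2 \leq \max\{k_x, k_y\}$ this pins every inequality down to equality: the offending configuration must have $k_x = k_y =: k$ \emph{even}, $\mathrm{dist}_G(x, y) = k + 1$, and $u, v$ the two middle vertices of some shortest $x$-$y$ path. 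The hard part of the proof will be to defeat these exceptional edges. My plan is to exploit the two hypotheses together: $\delta \geq 3$ provides at least two neighbors of $x$ (and of $y$) that do not lie on any shortest $x$-$y$ path, so one can steer the first step of each geodesic $P_x$ out of the $x$-$y$ corridor, and $g \geq 6$ then forbids the short cycles that would otherwise force the rerouted geodesic back into the corridor near the midpoint. For the remaining cases where uniform steering fails, I would absorb each forced conflict into the slack $|I_x| - k_x/2 = 1$ available exactly when $k_x$ is even — which is precisely the parity of the bad case — so that, together with the additive $|S|/2$ in $\sum_x |I_x|$, the final set $I$ still satisfies $|I| > \tfrac{1}{2}\alpha_b(G)$. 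Verifying this rerouting-or-slack dichotomy uniformly over all bad pairs is the combinatorial core of the argument and is where both the girth and the minimum-degree hypotheses are genuinely used.
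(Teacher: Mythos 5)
Your skeleton matches the paper's (grow a disjoint independent ``blob'' $I(x)$ around each broadcasting vertex $x$, worth more than $f(x)/2$ vertices, and let (B2) keep the blobs apart), but your proof is not complete: the step you yourself flag as ``the combinatorial core'' --- defeating the exceptional edges between $I_x$ and $I_y$ when $f(x)=f(y)$ is even and ${\rm dist}_G(x,y)=f(x)+1$ --- is only a plan, and neither of its two branches is likely to close as described. Rerouting the first step of the geodesic $P_x$ does not control where its \emph{far} endpoint lands, and a single $x$ may have many bad partners $y$, so there is no single ``corridor'' to steer out of. The slack branch is also too thin: your $I_x$ contains up to two vertices at distance exactly $r_x=\lfloor f(x)/2\rfloor$ from $x$ (the last path vertex and the last off-path replacement), so killing all conflicts at $x$ may cost $2$ deletions against a slack of only $1$, dropping $|I_x|$ below $f(x)/2$; summing $1-d_x$ over $X$ then need not stay positive. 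There is also an unresolved off-by-one in the construction itself: for $f(x)=2$ your recipe asks for an independent set of size $r_x+1=2$ inside a geodesic of length $1$, and for odd $r_x$ the outermost replacement vertex can land at distance $r_x+1$ from $x$, breaking the containment $I_x\subseteq B_{r_x}(x)$ on which your disjointness argument rests.

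The paper avoids the exceptional case altogether by a different accounting of the degree hypothesis: instead of harvesting one independent vertex per step of the geodesic (which forces the blob's radius up to $f(x)/2$ and creates your boundary collisions), it harvests \emph{at least two} vertices per two steps along the path ($N_G(x_{2i+3})\setminus\{x_{2i+2}\}$ has size $\geq 2$ since $\delta\geq 3$) plus a bulb of $\geq 7$ vertices near $x$ (namely $x$ and the $\geq 6$ vertices at distance exactly $2$, independent because $g\geq 6$). This yields $|I(x)|>f(x)/2$ while keeping the radius of $I(x)$ at most $\frac{f(x)-1}{2}$, strictly below $f(x)/2$, so an edge between $I(x)$ and $I(x')$ already forces ${\rm dist}_G(x,x')\leq\frac{f(x)}{2}+\frac{f(x')}{2}\leq\max\{f(x),f(x')\}$, contradicting (B2) with no exceptional configuration to repair. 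The only equality case left is $f\equiv 2$ on $X$, which the paper dispatches by a separate packing argument ($X$ would be a maximum packing with $\alpha(G)=\rho(G)$, contradicted by replacing one $x\in X$ with two nonadjacent neighbours). If you want to salvage your version, the fix is exactly this: use $\delta\geq 3$ to widen each blob rather than to lengthen it.
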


\begin{theorem}\label{theorem2}
For every positive integer $k$,
there is a connected graph $G$ of girth at least $k$ and minimum degree at least $k$ 
such that 
$$\alpha_b(G)\geq 2\left(1-\frac{1}{k}\right)\alpha(G).$$ 
\end{theorem}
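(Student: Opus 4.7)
The plan is to construct an explicit graph $G_k$ for each positive integer $k$. The motivating observation is that the even cycle $C_{2k}$ already realizes the ratio with equality: $\alpha(C_{2k})=k$, and placing $f$-value $k-1$ at each of two antipodal vertices (which are at distance exactly $k$, so (B2) holds since $\max\{f(x),f(y)\}=k-1<k$) yields an independent broadcast of weight $2k-2$. Hence $\alpha_b(C_{2k})/\alpha(C_{2k})=2(1-1/k)$. The obstacle is that $C_{2k}$ has minimum degree $2$ and girth $2k$, whereas both parameters must be at least $k$ simultaneously.

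I would produce $G_k$ by \emph{thickening} $C_{2k}$: glue a small auxiliary graph $H$, of girth at least $k$ and minimum degree at least $k-1$ (existence guaranteed by the Erd\H{o}s--Sachs theorem), at each cycle vertex $v_i$ by identifying a single ``port'' of a fresh copy of $H$ with $v_i$. For $k=3$ one takes $H=K_4$ explicitly: then $G_3$ is $C_6$ with a $K_4$ attached at every vertex. Every original vertex gains degree $3$ from $K_4$, and every non-port vertex has degree $3$ inside its copy of $K_4$, so $\delta(G_3)=3$; the girth is $3$; each of the six copies of $K_4$ contributes at most one vertex to any independent set so $\alpha(G_3)=6$; and placing $f=4$ at two non-port vertices of antipodal copies (which are at distance $5$) yields an independent broadcast of weight $8$, giving ratio $8/6=4/3=2(1-1/3)$. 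For general $k$ the plan is to verify the analogous facts: minimum degree $\geq k$, girth $\geq k$ (short cycles live inside a single copy of $H$, and any cycle leaving a copy must traverse the macro cycle of length $2k$), the bound $\alpha(G_k)\leq 2k\,\alpha(H)$, and the bound $\alpha_b(G_k)\geq 2\,\mathrm{diam}(G_k)-2\geq 2k+4\,\mathrm{diam}(H)-2$ from an antipodal broadcast.

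The main obstacle will be making the numerics give exactly $2(1-1/k)$ for large $k$: the Moore bound forces $\alpha(H)$ to grow much faster than $\mathrm{diam}(H)$ whenever $H$ has both girth and minimum degree close to $k$, and a naive cycle-of-gadgets construction will therefore in general fall short of the target ratio. For large $k$ one most likely needs a different underlying shape, for instance a \emph{thick spider} in which $k$ arms meet at a central vertex and each arm is a $k$-regular girth-$\geq k$ tube of internal diameter roughly $k/2$; placing weight $k-1$ at the far end of every arm gives broadcast $\approx k(k-1)$, while the independence number can be arranged to be $\approx k^2/2$, yielding ratio $2(1-1/k)$. In either case the technical heart of the proof is the precise numerical verification of the ratio, not the existence of the building block.
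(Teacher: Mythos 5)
There is a genuine gap, and it is the one you half-identify yourself but do not resolve. Both of your constructions (the thickened $C_{2k}$ and the thick spider) certify the broadcast weight by placing large $f$-values at a bounded number of far-apart vertices, so the weight you obtain is $O(\mathrm{diam}(G_k))=O(k^2)$ at best. But the Moore bound forces \emph{any} graph $H$ with girth at least $k$ and minimum degree at least $k$ to have at least roughly $(k-1)^{\lfloor (k-1)/2\rfloor}$ vertices, and since the ball of radius $\lfloor (k-1)/2\rfloor$ around a vertex induces a tree, $\alpha(H)$ is itself at least half that, i.e.\ exponential in $k$. Hence your claim that the spider's independence number ``can be arranged to be $\approx k^2/2$'' is impossible: every arm alone contributes an independent set of size exponential in $k$, so $\alpha(G_k)$ dwarfs the weight $\approx k(k-1)$ of the proposed broadcast, and the ratio you can certify collapses to essentially $1$ (the trivial bound $\alpha_b\geq\alpha$). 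The $k=3$ computation with $K_4$'s only works because $k=3$ is below the threshold where the Moore bound bites. No reshaping of the macro structure fixes this; the obstruction applies to any broadcast supported on $O(k)$ vertices.

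The way out, which is what the paper does, is to use a broadcast with \emph{many} centers of small value rather than few centers of large value: if $P$ is a packing, assigning $f\equiv 2$ on $P$ is an independent broadcast of weight $2|P|$, so it suffices to build a connected graph of girth and minimum degree at least $k$ with $\rho(G)\geq\left(1-\frac{1}{k}\right)\alpha(G)$. The paper achieves this probabilistically, \`a la Erd\H{o}s: take $H\in{\cal G}(n,n^{\epsilon-1})$, blow each vertex up into a star $K_{1,k}$ joined to other stars by random leaf--leaf edges, and delete few vertices to kill short cycles and low degrees. The star centers form a packing of size $n_\ell$, while a union-bound computation shows there is no large ``independent transversal'' of the leaf sets, so $\alpha(G)\leq n_\ell+\frac{n}{2k}$, giving the ratio $2\left(1-\frac{1}{k}\right)$. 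Your proposal, as it stands, cannot reach this because it never produces a packing whose size is comparable to the (necessarily exponential) independence number.
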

Together, these two results imply that 
lower bounds on the girth and the minimum degree
of a connected graph $G$
can lower the fraction $\frac{\alpha_b(G)}{\alpha(G)}$
from $4$ below $2$, but not any further.
The proof of Theorem \ref{theorem2}
is an adaptation of Erd\H{o}s's \cite{e} 
famous probabilistic proof 
of the existence of graphs of arbitrarily 
large girth and chromatic number,
and it actually implies the existence, 
for every positive integer $k$,
of a connected graph $G$ of girth at least $k$ 
and minimum degree at least $k$ 
such that 
$$\rho(G)\geq \left(1-\frac{1}{k}\right)\alpha(G).$$
The method used in the proof of Theorem \ref{theorem1}
also yields the following.

\begin{theorem}\label{theorem3}
Let $G$ be a connected graph of girth at least $g$ 
and minimum degree at least $\delta$.
\begin{enumerate}[(i)]
\item If $g=6$ and $\delta=5$, then
$\alpha_b(G) \leq \alpha(G)+\rho(G).$
\item If $\xi$ is a real number with $2\leq \xi<4$,
$g=4$, and $\delta\geq \frac{10}{\xi}$, then
$\alpha_b(G) \leq \xi\alpha(G).$
\end{enumerate}
\end{theorem}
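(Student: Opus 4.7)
The plan is to adapt the argument underlying Theorem~\ref{theorem1}, turning an optimal independent broadcast $f$ on $G$ into an independent set of $G$ of controlled size. Partition $V^+=\{v:f(v)>0\}$ into $V_1=\{v:f(v)=1\}$ and $V_2^+=\{v:f(v)\geq 2\}$. Condition~(B2) forces $V_1$ to be independent and any two distinct vertices of $V_2^+$ to be at distance strictly greater than $2$; hence $V_2^+$ is a packing, so $|V_1|\leq\alpha(G)$ and $|V_2^+|\leq\rho(G)$. Setting $s_x=\lfloor f(x)/2\rfloor$ and $B_x=B_G(x,s_x)$, (B2) gives $\mathrm{dist}(x,x')\geq\max\{f(x),f(x')\}+1$ for distinct $x,x'\in V_2^+$, from which one checks that the balls $B_x$ are pairwise disjoint and not joined by any edge, and that every vertex of $B_x\setminus\{x\}$ is at distance at least $2$ from every vertex of $V^+\setminus\{x\}$. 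Therefore, for any choice of independent sets $I_x\subseteq B_x\setminus\{x\}$ with $x\in V_2^+$, the union $I=V_1\cup\bigcup_{x\in V_2^+}I_x$ is independent in $G$.

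For part~(i), I would aim at $|I_x|=f(x)-1$. Summing gives
$$\alpha_b(G)\;=\;|V_1|+\!\!\!\sum_{x\in V_2^+}\!\!\!f(x)\;=\;|I|+|V_2^+|\;\leq\;\alpha(G)+\rho(G).$$
The existence of $I_x$ of this size is what $g\geq 6$ and $\delta\geq 5$ are designed to guarantee: for $f(x)\leq 6$ simply take $f(x)-1\leq 5$ of the (pairwise non-adjacent) neighbors of $x$; for $f(x)\geq 7$ the ball $B_x$ has radius at least $3$, and the tree structure that $g\geq 6$ imposes on $B(x,2)$ provides plenty of room, e.g.\ by omitting one neighbor of $x$ and taking its $\delta-1=4$ grandchildren together with the remaining $\delta-1=4$ neighbors (these are pairwise non-adjacent since $g\geq 6$ rules out $4$- and $5$-cycles through $x$), supplemented by deeper BFS levels when $f(x)$ is larger still.

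For part~(ii), I would aim at $|I_x|\geq\lceil f(x)/\xi\rceil$, which yields
$$|I|\;\geq\;|V_1|+\frac{1}{\xi}\!\!\!\sum_{x\in V_2^+}\!\!\!f(x)\;\geq\;\frac{1}{\xi}\alpha_b(G),$$
so $\alpha_b(G)\leq\xi\alpha(G)$. Since $g\geq 4$ makes $N(x)$ independent of size at least $\delta\geq 10/\xi$, this is immediate whenever $\lceil f(x)/\xi\rceil\leq\delta$, i.e.\ whenever $f(x)\leq 10$. For $f(x)>10$ I would extend $N(x)$ by the vertices $u_3,u_5,u_7,\dots$ on a shortest path from $x$ to a vertex at distance $f(x)$ (which exists because $f(x)\leq\mathrm{ecc}_G(x)$) that lie inside $B_x$; such geodesic vertices are pairwise non-adjacent and non-adjacent to $N(x)$, and the precise constant $10$ is chosen so that the combination of the $\delta$ neighbors and the geodesic contribution inside $B_x$ reaches the target $\lceil f(x)/\xi\rceil$ for every $\xi\in[2,4)$.

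The main obstacle is the careful verification that $I_x$ of the prescribed size actually exists inside $B_x\setminus\{x\}$. In~(i) the rigid tree structure of $B(x,2)$ forced by $g\geq 6$ keeps the counting clean, and the bottleneck is matching $f(x)-1$ to $\delta=5$ at the first few values of $f(x)$. In~(ii) the weaker girth must be compensated by a larger minimum degree, and the calibration $\delta\geq 10/\xi$ between the number of independent neighbors and the depth-$\lfloor f(x)/2\rfloor$ geodesic contribution is the delicate point of the argument.
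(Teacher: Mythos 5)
Your overall strategy---converting the optimal broadcast $f$ into an independent set by choosing a local independent set near each broadcast vertex, and bounding $|V_2^+|$ by $\rho(G)$ in part (i)---is exactly the paper's, but two of your key claims fail as stated. First, the separation lemma is false: for distinct $x,x'\in V_2^+$ with $f(x)=f(x')$ even, (B2) only gives ${\rm dist}_G(x,x')\geq f(x)+1$, so two vertices on the boundaries of $B_G(x,\lfloor f(x)/2\rfloor)$ and $B_G(x',\lfloor f(x')/2\rfloor)$ can be at distance $1$, i.e.\ adjacent. This is not a removable technicality in your setup: for $f(x)=f(x')=2$ and ${\rm dist}_G(x,x')=3$ your rule forces $I_x$ and $I_{x'}$ to be single neighbors of $x$ and $x'$, and these may be the two internal vertices of a path of length $3$ joining $x$ to $x'$, so $I$ need not be independent and the count $\alpha_b(G)=|I|+|V_2^+|$ collapses. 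The paper avoids this by taking $I(x)=\{x\}$ when $f(x)\leq 2$ and, for larger $f(x)$, by keeping $I(x)$ inside a ball of radius at most roughly $(f(x)-1)/2$, which restores the missing unit of slack; you need one of these repairs. (Your part (i) claim that ``deeper BFS levels'' supply the remaining vertices also needs care, since with girth $6$ the BFS levels at depth $\geq 3$ are not independent; the paper instead harvests $N_G(x_{2i-2})\setminus\{x_{2i-3}\}$ along a shortest path, which is what your sketch should be made to say.)

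Second, and more seriously, your construction for part (ii) is quantitatively insufficient. Beyond the $\delta$ independent neighbors of $x$, you collect only the odd-indexed geodesic vertices $u_3,u_5,\dots$ up to depth $\lfloor f(x)/2\rfloor$, i.e.\ about $f(x)/4$ additional vertices, so $|I_x|\leq \delta+f(x)/4+O(1)$. Since $\xi<4$, the target $\lceil f(x)/\xi\rceil$ grows like $f(x)/\xi>f(x)/4$, and for $f(x)$ large (already $f(x)>20$ when $\xi=2$, $\delta=5$) your set is too small; no choice of the constant $10$ can fix a deficient growth rate. The paper's construction instead takes the \emph{entire} neighborhoods $N_G(x),N_G(x_4),N_G(x_8),\dots$ of every fourth vertex of a shortest path of length about $f(x)/2$: these neighborhoods are pairwise at distance at least $2$ and each is independent because $g\geq 4$, yielding $\delta\left\lfloor\frac{f(x)+5}{8}\right\rfloor\geq\frac{10}{\xi}\cdot\frac{f(x)-2}{8}\geq\frac{f(x)}{\xi}$ vertices. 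Replacing your geodesic singletons by these full neighborhoods is the missing idea.
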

All proofs are given in the next section.

\section{Proofs}

\begin{proof}[Proof of Theorem \ref{theorem1}]
Let $G$ be as in the statement.
Let $f:V(G)\to\mathbb{N}_0$ be an optimal independent broadcast on $G$.
Let $X=\{ x\in V(G):f(x)>0\}$.
To every vertex $x$ in $X$, we assign a set $I(x)$ as follows:
\begin{itemize}
\item If $1\leq f(x)\leq 2$, then let $I(x)=\{ x\}$.
\item If $3\leq f(x)\leq 5$, then let $I(x)=N_G(x)$.
\item If $6\leq f(x)\leq 13$, then let 
$I(x)=\big\{ y\in V(G):{\rm dist}_G(x,y)\in \{ 0,2\}\big\}.$
\item If $f(x)\geq 14$, then, by (B1),
there is a shortest path $P(x):xx_1\ldots x_{2\ell+4}$ 
in $G$ with $\ell=\left\lfloor\frac{f(x)-9}{4}\right\rfloor$.
Let
$$I(x)=\big\{ y\in V(G):{\rm dist}_G(x,y)\in \{ 0,2\}\big\}
\cup\bigcup\limits_{i=1}^\ell \big(N_G(x_{2i+3})\setminus \{ x_{2i+2}\}\big).$$ 
See Figure \ref{fig1} for an illustration.
\end{itemize}
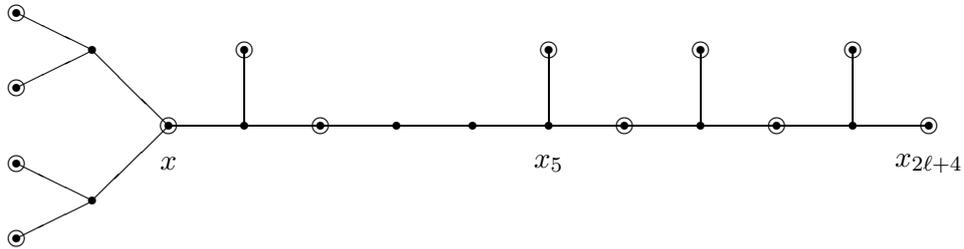
\begin{figure}[H]
\begin{center}
\unitlength 1mm 
\linethickness{0.4pt}
\ifx\plotpoint\undefined\newsavebox{\plotpoint}\fi 
\begin{picture}(121,31)(0,0)
\put(20,15){\circle*{1}}
\put(30,15){\circle*{1}}
\put(40,15){\circle*{1}}
\put(30,25){\circle*{1}}
\put(0,20){\circle*{1}}
\put(0,0){\circle*{1}}
\put(20,15){\line(1,0){20}}
\put(50,15){\circle*{1}}
\put(60,15){\circle*{1}}
\put(70,15){\circle*{1}}
\put(80,15){\circle*{1}}
\put(90,15){\circle*{1}}
\put(100,15){\circle*{1}}
\put(110,15){\circle*{1}}
\put(120,15){\circle*{1}}
\put(120,15){\line(-1,0){80}}
\put(70,25){\circle*{1}}
\put(90,25){\circle*{1}}
\put(110,25){\circle*{1}}
\put(110,25){\line(0,-1){10}}
\put(90,25){\line(0,-1){10}}
\put(70,25){\line(0,-1){10}}
\put(10,25){\circle*{1}}
\put(10,5){\circle*{1}}
\put(0,30){\circle*{1}}
\put(0,10){\circle*{1}}
\put(0,30){\circle{2}}
\put(0,10){\circle{2}}
\put(30,25){\circle{2}}
\put(0,20){\circle{2}}
\put(0,0){\circle{2}}
\put(40,15){\circle{2}}
\put(80,15){\circle{2}}
\put(70,25){\circle{2}}
\put(20,15){\circle{2}}
\put(90,25){\circle{2}}
\put(100,15){\circle{2}}
\put(110,25){\circle{2}}
\put(120,15){\circle{2}}
\put(20,10){\makebox(0,0)[cc]{$x$}}
\put(120,10){\makebox(0,0)[cc]{$x_{2\ell+4}$}}
\put(0,30){\line(2,-1){10}}
\put(10,25){\line(1,-1){10}}
\put(20,15){\line(-1,-1){10}}
\put(10,5){\line(-2,1){10}}
\put(0,20){\line(2,1){10}}
\put(30,15){\line(0,1){10}}
\put(10,5){\line(-2,-1){10}}
\put(70,10){\makebox(0,0)[cc]{$x_5$}}
\end{picture}
\end{center}
\caption{The set $I(x)$ for a vertex $x$ with $f(x)\in \{ 21,22,23,24\}$, where we assume that certain vertices have degree exactly $3$.}\label{fig1}
\end{figure}
By the girth condition and the choice of $P(x)$ as a shortest path,
the set $I(x)$ is an independent set for every $x$ in $X$.

Suppose, for a contradiction, 
that there are distinct vertices $x$ and $x'$ in $X$
such that the sets $I(x)$ and $I(x')$ 
intersect or are joined by an edge.
Let $f(x)\geq f(x')$.
If $1\leq f(x)\leq 2$, then ${\rm dist}_G(x,x')=1$,
if $3\leq f(x)\leq 5$, then ${\rm dist}_G(x,x')\leq 3$, and
if $6\leq f(x)\leq 13$, then ${\rm dist}_G(x,x')\leq 5$, 
which contradicts (B2) in each case.
Now, let $f(x)\geq 14$.
If $f(x')\leq 13$, then 
$${\rm dist}_G(x,x')\leq 
\left(2\left\lfloor\frac{f(x)-9}{4}\right\rfloor+4\right)+3
\leq 
\frac{f(x)-9}{2}+7
\leq f(x),$$
and, 
if $f(x')\geq 14$, then 
\begin{eqnarray*}
{\rm dist}_G(x,x')&\leq &
\left(2\left\lfloor\frac{f(x)-9}{4}\right\rfloor+4\right)+1+\left(2\left\lfloor\frac{f(x')-9}{4}\right\rfloor+4\right)\\
&\leq &
\frac{f(x)}{2}+\frac{f(x')}{2}\\
&\leq & \max\{ f(x),f(x')\},
\end{eqnarray*}
again contradicting (B2) in each case.
Therefore, $I=\bigcup\limits_{x\in X}I(x)$ is an independent set in $G$.

Let $x$ be a vertex in $X$.
If either $f(x)=1$ or $3\leq f(x)\leq 13$, 
then the girth and degree conditions imply 
$|I(x)|>\frac{f(x)}{2}$.
Similarly, if $f(x)\geq 14$, 
then, by the girth and degree conditions,
and the choice of $P(x)$ as a shortest path,
we obtain
$$|I(x)|\geq 7+2\left\lfloor\frac{f(x)-9}{4}\right\rfloor
\geq 7+\frac{f(x)-12}{2}>\frac{f(x)}{2}.$$
Finally, if $f(x)=2$, then $|I(x)|=\frac{f(x)}{2}$,
that is, only in this final case, equality holds.

Altogether, we obtain
$$\alpha(G)\geq |I|
\geq \sum\limits_{x\in X}|I(x)|
\geq \sum\limits_{x\in X}\frac{f(x)}{2}
\geq \frac{\alpha_b(G)}{2}.$$
Suppose, for a contradiction, 
that $\alpha(G)=\frac{\alpha_b(G)}{2}$,
that is, the above inequality chain holds with equality throughout.
This implies that $f(x)=2$ for every $x$ in $X$.
By (B2), the set $X$ is a packing in $G$, 
which implies 
$$\alpha(G)\geq \rho(G)\geq |X|=\frac{\alpha_b(G)}{2}=\alpha(G),$$
that is, $\alpha(G)=\rho(G)$, and $X$ is a maximum packing in $G$.
Now, replacing $x$ within $X$ by two nonadjacent neighbors yields
an independent set of order $|X|+1$,
contradicting $\alpha(G)=\rho(G)$;
cf. \cite{jora} for a structural characterization 
of the graphs that satisfy $\alpha(G)=\rho(G)$.
This completes the proof.
\end{proof}

\begin{proof}[Proof of Theorem \ref{theorem2}]
Let $k$ be a fixed integer at least $3$.
Let the real $\epsilon$ be such that $0<\epsilon<\frac{1}{k^2}$.
Let $H$ be a random graph in ${\cal G}(n,p)$ for $p=n^{\epsilon-1}$.
Let $V(H)=\{ u_1,\ldots,u_n\}$.
Let $G$ arise from the disjoint union of $n$ copies 
$S_1,\ldots,S_n$ of the star $K_{1,k}$ of order $k+1$,
where $S_i$ has center vertex $c_i$ 
and set of endvertices $L_i$ for $i$ in $[n]$, as follows:
For every edge $u_iu_j$ of $H$,
select one vertex $x_i$ in $L_i$ uniformly at random
and one vertex $x_j$ in $L_j$ uniformly at random,
and add the edge $x_ix_j$ to $G$.

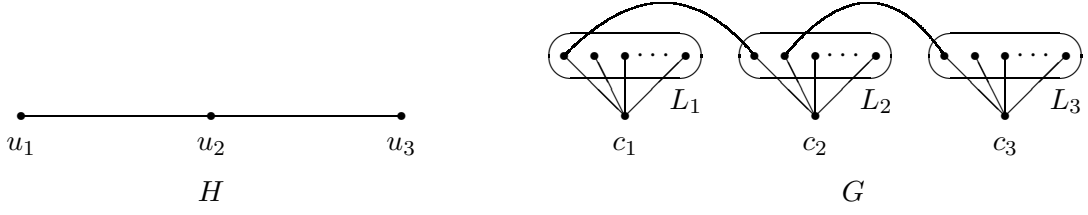
\begin{figure}
\begin{center}
$\mbox{}$\hfill
\unitlength 1mm 
\linethickness{0.4pt}
\ifx\plotpoint\undefined\newsavebox{\plotpoint}\fi 
\begin{picture}(56,6)(0,0)
\put(5,5){\circle*{1}}
\put(55,5){\circle*{1}}
\put(30,5){\circle*{1}}
\put(5,5){\line(1,0){50}}
\put(5,1){\makebox(0,0)[cc]{$u_1$}}
\put(30,1){\makebox(0,0)[cc]{$u_2$}}
\put(30,-5){\makebox(0,0)[cc]{$H$}}
\put(55,1){\makebox(0,0)[cc]{$u_3$}}
\end{picture}\hfill
\unitlength 1mm 
\linethickness{0.4pt}
\ifx\plotpoint\undefined\newsavebox{\plotpoint}\fi 
\begin{picture}(75,27)(0,0)
\put(15,5){\circle*{1}}
\put(65,5){\circle*{1}}
\put(40,5){\circle*{1}}
\put(15,1){\makebox(0,0)[cc]{$c_1$}}
\put(40,1){\makebox(0,0)[cc]{$c_2$}}
\put(65,1){\makebox(0,0)[cc]{$c_3$}}
\put(40,13){\oval(20,6)[]}
\put(65,13){\oval(20,6)[]}
\put(15,13){\oval(20,6)[]}
\put(7,13){\circle*{1}}
\put(32,13){\circle*{1}}
\put(57,13){\circle*{1}}
\put(11,13){\circle*{1}}
\put(36,13){\circle*{1}}
\put(61,13){\circle*{1}}
\put(15,13){\circle*{1}}
\put(40,13){\circle*{1}}
\put(65,13){\circle*{1}}
\put(23,13){\circle*{1}}
\put(48,13){\circle*{1}}
\put(73,13){\circle*{1}}
\put(19,13){\makebox(0,0)[cc]{$\cdots$}}
\put(44,13){\makebox(0,0)[cc]{$\cdots$}}
\put(69,13){\makebox(0,0)[cc]{$\cdots$}}
\put(7,13){\line(1,-1){8}}
\put(32,13){\line(1,-1){8}}
\put(57,13){\line(1,-1){8}}
\put(15,5){\line(-1,2){4}}
\put(40,5){\line(-1,2){4}}
\put(65,5){\line(-1,2){4}}
\put(15,13){\line(0,-1){8}}
\put(40,13){\line(0,-1){8}}
\put(65,13){\line(0,-1){8}}
\put(15,5){\line(1,1){8}}
\put(40,5){\line(1,1){8}}
\put(65,5){\line(1,1){8}}
\qbezier(7,13)(20.5,27)(32,13)
\qbezier(36,13)(47.5,27)(57,13)
\put(23,7){\makebox(0,0)[cc]{$L_1$}}
\put(48,7){\makebox(0,0)[cc]{$L_2$}}
\put(45,-5){\makebox(0,0)[cc]{$G$}}
\put(73,7){\makebox(0,0)[cc]{$L_3$}}
\end{picture}\hfill$\mbox{}$
\end{center}
\caption{Some $H$ and $G$.}\label{fig3}
\end{figure}
If $X$ denotes the number of cycles of length less than $k$ in $H$,
then it is known (cf. Theorem 11.2.2. in \cite{di}) that 
$$\lim\limits_{n\to\infty}\mathbb{P}\left[X\geq \frac{n}{2}\right]=0.$$
A set $I$ is an {\it independent transversal} if 
\begin{enumerate}[(i)]
\item $I$ is an independent set in $G$,
\item $I\cap \{ c_1,\ldots,c_n\}=\emptyset$, and 
\item $|I\cap L_i|\leq 1$ for every $i$ in $[n]$.
\end{enumerate}
Note that if $i$ and $j$ are distinct indices in $[n]$,
then a vertex in $L_i$ is adjacent to a vertex in $L_j$
with probability $\frac{p}{k^2}$.
Note furthermore, that there are ${n\choose r}k^r$
sets $I$ of order $r$ that satisfy the conditions (ii) and (iii) above.
Therefore, if $\beta$ denotes the maximum order of an independent transversal, then, by the union bound, 
we obtain, for $r=\frac{n}{2k^2}$,
\begin{eqnarray*}
\mathbb{P}\left[\beta\geq r\right] & \leq & {n\choose r}k^r\left(1-\frac{p}{k^2}\right)^{{r \choose 2}}\\
& \leq & n^rk^r\left(1-\frac{p}{k^2}\right)^{r(r-1)/2}\\
& = & \left(nk\left(1-\frac{p}{k^2}\right)^{(r-1)/2}\right)^r\\
& \leq & \left(nke^{-\frac{p(r-1)}{2k^2}}\right)^r
\,\,\,\,\,\,\,\mbox{(using $1-x\leq e^{-x}$).}
\end{eqnarray*}
For $n$ sufficiently large, 
we have $p\geq \frac{6k^4\ln n}{n}$, 
which implies (cf. Lemma 11.2.1. in \cite{di})
\begin{eqnarray*}
nke^{-\frac{p(r-1)}{2k^2}} 
& = & nke^{\left(-\frac{pn}{4k^4}+\frac{p}{2k^2}\right)}
\leq nke^{\left(-\frac{3}{2}\ln (n)+\frac{1}{2}\right)}
= \frac{k\sqrt{e}}{\sqrt{n}}\to 0\mbox{ for }n\to\infty,
\end{eqnarray*}
and, hence, 
$$\lim\limits_{n\to\infty}\mathbb{P}\left[\beta\geq \frac{n}{2k^2}\right]=0.$$
Therefore, if $n$ is sufficiently large, 
then 
$$\mathbb{P}\left[X\geq \frac{n}{2}\right]+\mathbb{P}\left[
\beta\geq \frac{n}{2k^2}\right]<1,$$
which implies the existence of a graph $H$ in ${\cal G}(n,p)$,
and a graph $G$ as above such that 
$X<\frac{n}{2}$
and 
$\beta<\frac{n}{2k^2}$.

For an induced subgraph $H'$ of $H$, 
let $G(H')=G\left[\bigcup\limits_{u_i\in V(H')}V(S_i)\right]$.

Let $F$ be a set of at most $\frac{n}{2}$ vertices of $H$
such that $H_0=H-F$ has no cycle of length less than $k$.
By construction, the graph $G(H_0)$ 
has no cycle of length less than $k$.
Note that $H_0$ has order at least $\frac{n}{2}$.

We construct a finite sequence $H_0,\ldots,H_\ell$ as follows: 
Let $i$ be a nonnegative integer such that $H_i$ is defined.
If $G(H_i)$ has minimum degree at least $k$,
then let $\ell=i$, and terminate the sequence.
Otherwise, $G(H_i)$ has a vertex $x_i$ of degree less than $k$.
By construction, there is a vertex $u_s$ of $H_i$ with $x_i\in L_s$.
Let $N$ be the set of indices $j$ in $[n]$ 
such that $x_i$ has a neighbor in $L_j$,
and let $H_{i+1}=H_i-\{ u_s\}\cup \{ u_j:j\in N\}$.
Note that $|N|<k$.

Since $\{ x_1,\ldots,x_{\ell}\}$ is an independent transversal,
we have $\ell\leq \frac{n}{2k^2}$, which implies that 
$H_\ell$ has order $n_{\ell}$ at least
$\frac{n}{2}-\frac{nk}{2k^2}=\frac{n}{2}\left(1-\frac{1}{k}\right)$.
The graph $G(H_{\ell})$ 
has girth at least $k$,
minimum degree at least $k$, 
and no independent transversal of order $\frac{n}{2k^2}$.
If $G(H_{\ell})$ is disconnected,
then adding some bridges to $G(H_{\ell})$
between different sets $L_i$ yields a connected graph $G^*$ that 
has girth at least $k$,
minimum degree at least $k$, 
and no independent transversal of order $\frac{n}{2k^2}$.

The function $f:V(G^*)\to\mathbb{N}_0$
that assigns $2$ to every vertex in $\{ c_i:u_i\in V(H_{\ell})\}$,
and $0$ to every other vertex,
is an independent broadcast on $G^*$,
which implies 
$\alpha_b(G^*)\geq 2n_{\ell}$.
Now, let $J$ be a maximum independent set in $G^*$.
Since $G^*$ has no independent transversal of order $\frac{n}{2k^2}$,
there are less than $\frac{n}{2k^2}$ indices $i$ in $[n]$
such that $J$ intersects $L_i$, which implies
$\alpha(G^*)=|J|\leq n_{\ell}+\frac{nk}{2k^2}=n_{\ell}+\frac{n}{2k}$.
Now,
$$\frac{\alpha_b(G^*)}{\alpha(G^*)} \geq 
\frac{2n_{\ell}}{n_{\ell}+\frac{n}{2k}}
\geq
\frac{2\frac{n}{2}\left(1-\frac{1}{k}\right)}{\frac{n}{2}\left(1-\frac{1}{k}\right)+\frac{n}{2k}}
=2\left(1-\frac{1}{k}\right),
$$
which completes the proof.
\end{proof}

\begin{proof}[Proof of Theorem \ref{theorem3}]
Let $G$ be a connected graph of girth at least $g$ 
and minimum degree at least $\delta$.
Let $f:V(G)\to\mathbb{N}_0$ be an optimal independent broadcast on $G$.
Let $X=\{ x\in V(G):f(x)>0\}$.

\bigskip

\noindent (i) First, we assume that $g=6$ and $\delta=5$.

To every vertex $x$ in $X$, 
we assign a set $I(x)$ as follows:
\begin{itemize}
\item If $1\leq f(x)\leq 2$, then let $I(x)=\{ x\}$.
\item If $f(x)\geq 3$, then, by (B1),
there is a shortest path $P(x):xx_1\ldots x_{2\ell-1}$ in $G$ with $\ell=\left\lfloor\frac{f(x)+1}{4}\right\rfloor$.
Let
$$I(x)=N_G(x)\cup\bigcup\limits_{i=2}^\ell \big(N_G(x_{2i-2})\setminus \{ x_{2i-3}\}\big).$$ 
See Figure \ref{fig2} for an illustration.
\end{itemize}

\begin{figure}[H]
\begin{center}
\unitlength 1.3mm 
\linethickness{0.4pt}
\ifx\plotpoint\undefined\newsavebox{\plotpoint}\fi 
\begin{picture}(97,20)(0,0)
\put(5,5){\circle*{1}}
\put(25,5){\circle*{1}}
\put(45,5){\circle*{1}}
\put(65,5){\circle*{1}}
\put(85,5){\circle*{1}}
\put(15,5){\circle*{1}}
\put(35,5){\circle*{1}}
\put(55,5){\circle*{1}}
\put(75,5){\circle*{1}}
\put(95,5){\circle*{1}}
\put(25,5){\circle*{1}}
\put(5,5){\line(1,0){20}}
\put(35,5){\circle*{1}}
\put(45,5){\circle*{1}}
\put(65,5){\circle*{1}}
\put(75,5){\circle*{1}}
\put(85,5){\circle*{1}}
\put(95,5){\circle*{1}}
\put(5,0){\makebox(0,0)[cc]{$x$}}
\put(55,5){\circle*{1}}
\put(5,15){\oval(6,10)[]}
\put(25,15){\oval(6,10)[]}
\put(45,15){\oval(6,10)[]}
\put(65,15){\oval(6,10)[]}
\put(85,15){\oval(6,10)[]}
\put(5,5){\line(0,1){5}}
\put(25,5){\line(0,1){5}}
\put(45,5){\line(0,1){5}}
\put(65,5){\line(0,1){5}}
\put(85,5){\line(0,1){5}}

\put(5,12){\circle*{1}}
\put(25,12){\circle*{1}}
\put(45,12){\circle*{1}}
\put(65,12){\circle*{1}}
\put(85,12){\circle*{1}}
\put(5,12){\circle{2}}
\put(25,12){\circle{2}}
\put(45,12){\circle{2}}
\put(65,12){\circle{2}}
\put(85,12){\circle{2}}

\put(5,15.7){\makebox(0,0)[cc]{$\vdots$}}
\put(25,15.7){\makebox(0,0)[cc]{$\vdots$}}
\put(45,15.7){\makebox(0,0)[cc]{$\vdots$}}
\put(65,15.7){\makebox(0,0)[cc]{$\vdots$}}
\put(85,15.7){\makebox(0,0)[cc]{$\vdots$}}

\put(5,18){\circle*{1}}
\put(25,18){\circle*{1}}
\put(45,18){\circle*{1}}
\put(65,18){\circle*{1}}
\put(85,18){\circle*{1}}
\put(5,18){\circle{2}}
\put(25,18){\circle{2}}
\put(45,18){\circle{2}}
\put(65,18){\circle{2}}
\put(85,18){\circle{2}}

\put(15,5){\circle{2}}
\put(35,5){\circle{2}}
\put(55,5){\circle{2}}
\put(75,5){\circle{2}}
\put(95,5){\circle{2}}
\put(25,5){\line(1,0){70}}
\put(95,0){\makebox(0,0)[cc]{$x_{2\ell-1}$}}
\end{picture}
\end{center}
\caption{The set $I(x)$ for a vertex $x$ with $f(x)\in \{ 19,20,21,22\}$.}\label{fig2}
\end{figure}
It follows similarly as in the proof of Theorem \ref{theorem1}
that the $I(x)$ are disjoint independent sets in $G$
that are not joined by edges within $G$.

Let $x$ be a vertex in $X$.
If $f(x)=1$, then $|I(x)|=f(x)$,
if $f(x)=2$, then $|I(x)|=f(x)-1$, and,
if $f(x)\geq 3$,
then, by the girth and degree conditions 
and the choice of $P(x)$ as a shortest path,
$$|I(x)|\geq 5+4\left(\left\lfloor\frac{f(x)+1}{4}\right\rfloor-1\right)
\geq 5+4\left(\frac{f(x)-2}{4}-1\right)
=f(x)-1.$$
Let $X_1=\{ x\in V(G):f(x)=1\}$.
It follows that $I=\bigcup\limits_{x\in X}I(x)$ is an independent set in $G$ of order at least
$\alpha_b(G)-|X\setminus X_1|=\sum\limits_{x\in X_1}f(x)
+\sum\limits_{x\in X\setminus X_1}(f(x)-1)$.
Since $X\setminus X_1$ is a packing in $G$,
we obtain
$\alpha(G)\geq \alpha_b(G)-|X\setminus X_1|
\geq \alpha_b(G)-\rho(G)$,
which completes the proof of (i).

\bigskip

\noindent (ii) Next, we assume that 
$\xi$ is a real number with $2\leq \xi<4$,
$g=4$, and $\delta\geq \frac{10}{\xi}$.

To every vertex $x$ in $X$, 
we assign a set $I(x)$ as follows:
\begin{itemize}
\item If $1\leq f(xleq 2$, then let $I(x)=\{ x\}$.
\item If $f(x)\geq 3$, then, by (B1),
there is a shortest path $P(x):xx_1\ldots x_{4\ell-3}$ in $G$ with $\ell=\left\lfloor\frac{f(x)+5}{8}\right\rfloor$.
Let $x_0=x$, and let
$$I(x)=\bigcup\limits_{i=1}^\ell N_G(x_{4(i-1)}).$$ 
See Figure \ref{fig4} for an illustration.
\end{itemize}
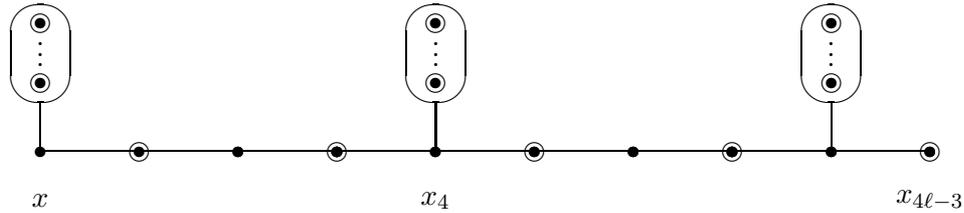
\begin{figure}[H]
\begin{center}
\unitlength 1.3mm 
\linethickness{0.4pt}
\ifx\plotpoint\undefined\newsavebox{\plotpoint}\fi 
\begin{picture}(96,20)(0,0)
\put(5,5){\circle*{1}}
\put(25,5){\circle*{1}}
\put(65,5){\circle*{1}}
\put(45,5){\circle*{1}}
\put(85,5){\circle*{1}}
\put(15,5){\circle*{1}}
\put(35,5){\circle*{1}}
\put(75,5){\circle*{1}}
\put(35,5){\circle*{1}}
\put(75,5){\circle*{1}}
\put(55,5){\circle*{1}}
\put(95,5){\circle*{1}}
\put(35,5){\circle*{1}}
\put(75,5){\circle*{1}}
\put(25,5){\circle*{1}}
\put(65,5){\circle*{1}}
\put(45,5){\circle*{1}}
\put(85,5){\circle*{1}}
\put(5,5){\line(1,0){20}}
\put(45,5){\line(1,0){20}}
\put(35,5){\circle*{1}}
\put(75,5){\circle*{1}}
\put(55,5){\circle*{1}}
\put(95,5){\circle*{1}}
\put(5,0){\makebox(0,0)[cc]{$x$}}
\put(5,15){\oval(6,10)[]}
\put(45,15){\oval(6,10)[]}
\put(85,15){\oval(6,10)[]}
\put(5,5){\line(0,1){5}}
\put(45,5){\line(0,1){5}}
\put(85,5){\line(0,1){5}}
\put(5,12){\circle*{1}}
\put(45,12){\circle*{1}}
\put(85,12){\circle*{1}}
\put(5,12){\circle{2}}
\put(45,12){\circle{2}}
\put(85,12){\circle{2}}
\put(5,15.7){\makebox(0,0)[cc]{$\vdots$}}
\put(45,15.7){\makebox(0,0)[cc]{$\vdots$}}
\put(85,15.7){\makebox(0,0)[cc]{$\vdots$}}
\put(5,18){\circle*{1}}
\put(45,18){\circle*{1}}
\put(85,18){\circle*{1}}
\put(5,18){\circle{2}}
\put(45,18){\circle{2}}
\put(85,18){\circle{2}}
\put(15,5){\circle{2}}
\put(55,5){\circle{2}}
\put(95,5){\circle{2}}
\put(35,5){\circle{2}}
\put(75,5){\circle{2}}
\put(95,0){\makebox(0,0)[cc]{$x_{4\ell-3}$}}
\put(25,5){\line(1,0){10}}
\put(65,5){\line(1,0){10}}
\put(45,5){\line(1,0){10}}
\put(85,5){\line(1,0){10}}
\put(35,5){\line(1,0){10}}
\put(75,5){\line(1,0){10}}
\put(45,0){\makebox(0,0)[cc]{$x_4$}}
\end{picture}
\end{center}
\caption{The set $I(x)$ for a vertex $x$ with $f(x)\in \{ 19,\ldots,26\}$.}\label{fig4}
\end{figure}
Again,
the $I(x)$ are disjoint independent sets in $G$
that are not joined by edges within $G$.

Let $x$ be a vertex in $X$.
If $1\leq f(x)\leq 2$, then $|I(x)|\geq \frac{f(x)}{2}\geq \frac{f(x)}{\xi}$,
if $3\leq f(x)\leq \left\lfloor \xi \delta\right\rfloor$, 
then $|I(x)|\geq \delta\geq \frac{f(x)}{\xi}$,
and, if $f(x)\geq \left\lfloor \xi \delta\right\rfloor+1$
then, by the girth and degree conditions 
and the choice of $P(x)$ as a shortest path,
$$|I(x)|\geq \delta\left\lfloor\frac{f(x)+5}{8}\right\rfloor
\geq \delta\frac{f(x)-2}{8}
\geq \frac{f(x)}{\xi},$$
where we use $f(x)\geq \xi \delta$ 
and $\delta\geq \frac{10}{\xi}$.
It follows that $\alpha(G)\geq \frac{\alpha_b(G)}{\xi}$,
which completes the proof of (ii).
\end{proof}

\end{document}